\colorlet{shadecolor}{blue!15}
\newtheorem{theorem}{Theorem}
\newtheorem{conj}{Conjecture}
\newtheorem{question}{Question}
\newtheorem{lemma}[theorem]{Lemma}
\newtheorem{proposition}[theorem]{Proposition}
\newtheorem{conjecture}[conj]{Conjecture}
\newenvironment{proof}[1][\relax]
  {\paragraph{Proof\ifx#1\relax\else~of #1\fi}}%
  {~\hfill$\square$\par\bigskip}
\newcommand{\bbE}{\mathbb{E}}
\newcommand{\bbG}{\mathbb{G}}
\newcommand{\bbR}{\mathbb{R}}
\newcommand{\bbT}{\mathbb{T}}
\newcommand{\bbV}{\mathbb{V}}
\newcommand{\bbZ}{\mathbb{Z}}
\newcommand{\rk}[1]{\bgroup\color{red}%
  \par\medskip\hrule\smallskip%
  \noindent\textbf{#1}%
  \par\smallskip\hrule\medskip\egroup}
\title{A note on Schramm's locality conjecture for random-cluster models}
\author{Hugo Duminil-Copin\thanks{Institut des Hautes \'Etudes Scientifiques, Universit\'e de Gen\`eve, \texttt{duminil@ihes.fr}}\ \ and
	Vincent Tassion\thanks{ETH Zurich, \texttt{vincent.tassion@math.ethz.ch}}}
\date{\today}
\begin{document}
\maketitle
 
 \begin{abstract}In this note, we discuss a generalization of Schramm's locality conjecture to the case of random-cluster models. We give some partial (modest) answers, and present several related open questions. Our main result is to show that 
the critical inverse temperature of the Potts model on $\bbZ^r\times(\bbZ/2n\bbZ)^{d-r}$ (with $r\ge3$) converges to the critical inverse temperature of the model on $\bbZ^d$ as $n$ tends to infinity. Our proof relies on the infrared bound and, contrary to the corresponding statement for Bernoulli percolation, does not involve renormalization arguments.
\end{abstract}
 \section{Motivation}

 In \cite{bs1}, Benjamini and Schramm initiated the theory of Bernoulli percolation on general transitive graph, thus motivating many new questions and problematics in the field. Among them, one conjecture, now known under the name of  Schramm's locality conjecture, was asked in \cite{bnp}. Roughly speaking, it can be stated as follows: the critical parameter of Bernoulli percolation is continuous in the local topology on transitive locally finite graphs with $p_c<1$ (see below for a formal definition in the context of random-cluster models). Since the formulation of the conjecture, a number of results have been proved \cite{bnp,MarTas,SXZ14}, yet a final answer is still lacking.

We would like to advertise that Schramm's conjecture can be extended to random-cluster model and the associated Potts models. This paper contains the proofs of two results in that direction. The results are modest but we believe that they open some interesting questions. Even though we will not discuss it here, let us mention that in recent years,  Schramm's conjecture was also stated for self-avoiding walks in \cite{ib}, and that some partial results were obtained in \cite{gl1,gl2}. 

\section{About Schramm's conjecture for random-cluster models}

\paragraph{Random-cluster model on transitive graphs}
Through this note, all the graphs are assumed to be connected and locally-finite.  Consider an infinite transitive graph $\bbG=(\bbV,\bbE)$ (recall that a graph is said to be transitive if its group of automorphisms acts transitively on $\bbV$). We  call one of the vertex the {\em origin} and denote it by 0. A percolation configuration $\omega$ on a finite subgraph $G=(V,E)$ of $\bbG$ is a subset of $E$, which will be seen as a subgraph of $G=(V,E)$ with vertex-set $V$ and edge-set $\omega$. Let $|\omega|$ be the number of edges in $\omega$ and $k_1(\omega)$ the number of connected components in $\omega$, when all connected components intersecting the vertex boundary $\partial G:=\{x\in V:\exists y\in\bbV\setminus V\text{ such that }\{x,y\}\in\bbE\}$ are counted as one.

For $p\in[0,1]$ and $q>0$, the random-cluster measure on $G$ with edge-weight $p$ and
  cluster-weight $q$ is defined
by the formula
$$\phi_{G,p,q}^1[\omega]=\frac{1}{Z^1(G,p,q)}\big(\frac p{1-p}\big)^{|\omega|}q^{k_1(\omega)},$$
where $Z^1(G,p,q)$ is  such that the measure has mass one. 

The model may be extended to $\bbG$ by taking weak limits of measures on finite subgraphs $G$ tending to $\bbG$; see \cite[Thm 4.19]{Gri06}. From now on, set $\phi_{\bbG,p,q}^1$ for the measure on $\bbG$. 

Below, we write $0\longleftrightarrow \infty$ to denote the event that the connected component of $0$ in $\omega$ is infinite. Define the {\em critical point} of the model on $\bbG$ by the formula
$$p_c(\bbG,q):=\inf\{p\in[0,1]: \phi_{\bbG,p,q}^1[0\longleftrightarrow\infty]>0\}.$$

\paragraph{Local convergence and Schramm's conjecture}
Given an infinite transitive graph $\bbG$, we consider the ball of radius $R$ (for the graph distance) around the origin 0. Up to isomorphism of rooted graphs, it does not depend on the choice of the origin, and we simply refer to it as the ball of radius $R$ in $\bbG$. We say that a sequence of infinite transitive graphs $(\bbG_n)$ converges (locally) to an infinite transitive graph $\bbG$ if for any $R>0$, there exists $N=N(R)>0$ such that for all $n\ge N$, the balls of radius $R$ in $\bbG_n$ and $\bbG$ are isomorphic (as rooted graphs). Schramm's conjecture for random-cluster models can be stated as follows.
\begin{conjecture}[Schramm's conjecture for random-cluster model]
Fix $q>0$ and consider a sequence of infinite transitive $\bbG_n$ converging to $\bbG$. If $p_c(\bbG_n,q)<1$ for all $n$, then, 
\begin{equation}\lim_{n\rightarrow\infty}p_c(\bbG_n,q)=p_c(\bbG,q).\label{eq:pl}\end{equation}
\end{conjecture}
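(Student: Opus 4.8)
In full generality the conjecture seems out of reach, because the two one-sided inequalities in \eqref{eq:pl} are of very different nature: $\liminf_n p_c(\bbG_n,q)\ge p_c(\bbG,q)$ is comparatively soft, whereas $\limsup_n p_c(\bbG_n,q)\le p_c(\bbG,q)$ forces one to exhibit an infinite cluster on $\bbG_n$ at a density $p$ that is only known to be supercritical on $\bbG$, while $\bbG_n$ agrees with $\bbG$ only on balls of bounded radius --- and for Bernoulli percolation this is exactly where one resorts to renormalisation. I would therefore restrict to the family for which the abstract announces a result: the Potts model with integer $q$ on $\bbG_n=\bbZ^r\times(\bbZ/2n\bbZ)^{d-r}$ with $r\ge3$, which converges locally to $\bbZ^d$, and use the infrared bound in place of renormalisation. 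By the Edwards--Sokal correspondence it suffices to prove $p_c(\bbG_n,q)\to p_c(\bbZ^d,q)$, and I would treat the two inequalities separately. Throughout write $G_{\bbG}(x)=\phi_{\bbG,p,q}^1[0\longleftrightarrow x]$ for the two-point function, $G_{\bbG}^{\mathrm{tr}}=G_{\bbG}-\phi_{\bbG,p,q}^1[0\longleftrightarrow\infty]^2$ for its truncated version, $\Gamma_{\bbG}$ for the Fourier transform of $G_{\bbG}^{\mathrm{tr}}$, and $E(k)=\sum_j(1-\cos k_j)$.

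For $\liminf_n p_c(\bbG_n,q)\ge p_c(\bbZ^d,q)$, fix $p<p_c(\bbZ^d,q)$. I would first prove convergence of correlations: for a box $\Lambda$ lying inside the ball on which $\bbG_n$ and $\bbZ^d$ agree, comparison between boundary conditions (valid for $q\ge1$) sandwiches both $\phi_{\bbG_n,p,q}^1$ and $\phi_{\bbZ^d,p,q}^1$ between $\phi_{\Lambda,p,q}^0$ and $\phi_{\Lambda,p,q}^1$ on increasing events measurable in $\Lambda$, so that $|\phi_{\bbG_n,p,q}^1[A]-\phi_{\bbZ^d,p,q}^1[A]|\le\phi_{\Lambda,p,q}^1[A]-\phi_{\Lambda,p,q}^0[A]$ for such $A$; letting $\Lambda\uparrow\bbZ^d$ and using that $\phi_{\bbZ^d,p,q}^0=\phi_{\bbZ^d,p,q}^1$ away from $p_c$ gives $\phi_{\bbG_n,p,q}^1[A]\to\phi_{\bbZ^d,p,q}^1[A]$ for every fixed local event $A$, in particular for $\{0\longleftrightarrow\partial B_k\}$ at every fixed $k$. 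By sharpness of the phase transition on $\bbZ^d$, $\phi_{\bbZ^d,p,q}^1[0\longleftrightarrow\partial B_k]$ decays exponentially; choosing $k$ so large that this probability lies below the threshold of a standard finite-size criterion for subcriticality (a threshold that is uniform over the $\bbG_n$ since they have bounded degree), and then $n$ large, one gets $\phi_{\bbG_n,p,q}^1[0\longleftrightarrow\partial B_k]$ below that threshold too, hence $p<p_c(\bbG_n,q)$. This direction uses neither reflection positivity nor $r\ge3$.

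For $\limsup_n p_c(\bbG_n,q)\le p_c(\bbZ^d,q)$, fix $p>p_c(\bbZ^d,q)$; the goal is $\phi_{\bbG_n,p,q}^1[0\longleftrightarrow\infty]>0$ for $n$ large. Each $\bbG_n$, being a product of lines and even cycles, is reflection positive, so the Potts infrared bound gives $0\le\Gamma_{\bbG_n}(k)\le C(p,q)\,E_{\bbG_n}(k)^{-1}$ with $C(p,q)$ independent of $n$; since $E_{\bbG_n}(k)\ge E(k_1,\dots,k_r)$ and $r\ge3$ ensures $\int_{[-\pi,\pi]^r}E(k_1,\dots,k_r)^{-1}\,\mathrm dk<\infty$, this bound is dominated by a \emph{fixed} integrable function of $k\in[-\pi,\pi]^d$, uniformly in $n$. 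I would feed this into the Fröhlich--Simon--Spencer sum rule, which expresses $\phi_{\bbG,p,q}^1[0\longleftrightarrow\infty]^2$ as an explicit local quantity minus the normalised Fourier-space integral of $\Gamma_{\bbG}$, and whose value is strictly positive on $\bbZ^d$ because $p>p_c(\bbZ^d,q)$. Passing the sum rule to the limit $n\to\infty$ by dominated convergence requires the pointwise convergence $\Gamma_{\bbG_n}(k)\to\Gamma_{\bbZ^d}(k)$; granting it, strict positivity transfers to $\bbG_n$, i.e.\ $\phi_{\bbG_n,p,q}^1[0\longleftrightarrow\infty]>0$ for $n$ large.

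The main obstacle is precisely this last pointwise convergence, because $\Gamma_{\bbG_n}$ depends on the order parameter $\phi_{\bbG_n,p,q}^1[0\longleftrightarrow\infty]^2$ that we are trying to control, so the argument is circular on its face. I would break the circularity by compactness: any subsequential limit $\mu$ of $\phi_{\bbG_n,p,q}^1[0\longleftrightarrow\infty]$, together with the already-established convergence of the two-point functions and the sum rule passed to the limit (the uniform infrared bound controlling the Fourier tails), must satisfy the $\bbZ^d$ sum rule, whence $\mu^2=\phi_{\bbZ^d,p,q}^1[0\longleftrightarrow\infty]^2$; thus $\phi_{\bbG_n,p,q}^1[0\longleftrightarrow\infty]\to\phi_{\bbZ^d,p,q}^1[0\longleftrightarrow\infty]>0$. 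In practice it is cleaner to run the whole argument in the presence of a small external magnetic field, where the analogue of the order parameter is unambiguous and strictly positive, and only then let the field tend to zero. It is this uniform infrared bound, integrable precisely because $r\ge3$, that converts the merely local agreement of $\bbG_n$ with $\bbZ^d$ into global control of the order parameter, in place of the renormalisation needed for Bernoulli percolation.
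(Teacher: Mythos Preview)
The statement is a conjecture and the paper does not prove it in full; you correctly narrow to the announced special case $\bbG_n=\bbZ^r\times(\bbZ/2n\bbZ)^{d-r}$, $r\ge3$, integer $q\ge2$. For the $\liminf$ inequality your route (convergence of local events via comparison between boundary conditions, then a finite-size subcriticality criterion) is a legitimate alternative to the paper's, which instead plugs the mean-field lower bound $\phi^1_{\bbG_n,p}[0\leftrightarrow\infty]\ge c(p-p_c(\bbG_n))$ directly into $\phi^1_{B_R,p}[0\leftrightarrow\partial B_r]$ and lets $R,r\to\infty$. Both work for $q\ge1$.

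For the $\limsup$ direction you invoke the right tool (the infrared bound) but your implementation has a genuine gap, and the paper's is structurally different. Your ``sum rule'' is essentially the tautology $\theta_{\bbG}^2=1-\int\Gamma_{\bbG}(k)\,dk$ coming from $G_{\bbG}^{\mathrm{tr}}(0)=1-\theta_{\bbG}^2$; this does not pin down $\theta_{\bbG_n}$ from the two-point function unless you already know $\theta_{\bbG_n}$, so the circularity you flag is real. Your compactness fix does not close it: along a subsequence with $\theta_{n_k}\to\mu$, the putative limit $G_{\bbZ^d}(x)-\mu^2$ need not be summable (if $\mu<\theta_{\bbZ^d}$ it tends to a positive constant), so there is no well-defined limiting $\Gamma$ to which dominated convergence could apply, and the uniform bound $\Gamma_{\bbG_n}(k)\le C/E(k)$ does not by itself force the Fourier transforms to converge pointwise. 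The magnetic-field detour might be salvageable but is an entirely separate argument that you have not carried out.

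The paper sidesteps all of this by never working with Fourier transforms on the infinite slab. It applies the infrared bound on the \emph{doubly finite} torus $\bbT_{N,n}=(\bbZ/2N\bbZ)^r\times(\bbZ/2n\bbZ)^{d-r}$ with the test function $v_x=\tfrac{1}{|\bbT_{N,n}|}-\tfrac{1}{|E|}\mathbbm 1[x\in E]$ for a fixed finite $E\subset\bbZ^d$. This produces the inequality
\[
\underbrace{\tfrac{1}{|E|^2}\sum_{x,y\in E}\langle\sigma_x\cdot\sigma_y\rangle_{\bbT_{N,n}}}_{\mathsf S_{N,n}}
-\underbrace{\tfrac{1}{|\bbT_{N,n}|}\sum_{y}\langle\sigma_0\cdot\sigma_y\rangle_{\bbT_{N,n}}}_{\mathsf T_{N,n}}
\le\tfrac{q-1}{2\beta}\,\underbrace{\tfrac{1}{|E|^2}\sum_{x,y\in E}\mathsf G_{N,n}(x,y)}_{\mathsf U_{N,n}}.
\]
The point is that the two left-hand terms are handled \emph{separately and non-circularly}: $\liminf\mathsf S_{N,n}\ge\phi^0_{\bbZ^d}[0\leftrightarrow\infty]^2$ because sub-sequential limits of $\phi_{\bbT_{N,n}}$ dominate $\phi^0_{\bbZ^d}$, while $\limsup\mathsf T_{N,n}\le\phi^1_{\bbT_{\infty,n}}[0\leftrightarrow\infty]^2$ by a direct two-arm estimate using comparison of boundary conditions. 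The right-hand side involves only the random-walk Green function on a \emph{finite} set $E$; it converges (first $N\to\infty$, using $r\ge3$ for transience, then $n\to\infty$) to $\tfrac{1}{|E|^2}\sum_{x,y\in E}\mathsf G_{\bbZ^d}(x,y)$, which one then sends to $0$ by letting $|E|\to\infty$. This yields $\phi^0_{\bbZ^d}[0\leftrightarrow\infty]\le\limsup_n\phi^1_{\bbT_{\infty,n}}[0\leftrightarrow\infty]$ directly, with no reference to truncated Fourier transforms or to $\theta_{\bbG_n}$ on the way.
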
 
For Bernoulli percolation Pete \cite[Section 14]{Pet15} noticed that
\begin{equation}
\liminf_{n\rightarrow\infty}p_c(\bbG_n,q)\ge p_c(\bbG,q)\label{eq:5}
\end{equation}
can be deduced from the mean-field lower bound. This inequality can be obtained in a number of other (elementary) ways for Bernoulli percolation; see e.g.~\cite[Thm 5.3]{Tas14} or \cite[Sec 1.2]{DumTas16} for finite criteria approaches. The same inequality was also known in the $q=2$ case; see \cite{DumTas16}.

For random-cluster models with $q\ge1$, the mean-field lower bound was recently established by the authors and Aran Raoufi \cite{DumRaoTas17}. The first result of this note is to derive \eqref{eq:5} from the mean-field lower bound.
\begin{proposition}\label{prop:1}
Fix $q\ge1$. Consider a sequence of infinite transitive graphs $\bbG_n$ converging to $\bbG$. Then,
$$\liminf_{n\rightarrow\infty}p_c(\bbG_n,q)\ge p_c(\bbG,q).$$
\end{proposition}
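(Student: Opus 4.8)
The plan is to argue by contradiction, converting the mean-field lower bound of \cite{DumRaoTas17} into a lower bound on $\phi_{\bbG,p,q}^1[0\longleftrightarrow\infty]$ for some $p<p_c(\bbG,q)$. Suppose the conclusion fails, so that $p^\star:=\liminf_n p_c(\bbG_n,q)<p_c(\bbG,q)$; fix $p\in(p^\star,p_c(\bbG,q))$ and a subsequence $(\bbG_{n_k})_k$ along which $p_c(\bbG_{n_k},q)\to p^\star$. For $q\ge1$ the mean-field lower bound furnishes a constant $c>0$, depending only on $q$ and on the degree of $\bbG$ (hence usable along the sequence, since $\bbG_n$ eventually has the same degree as $\bbG$), such that $\phi_{\bbG_{n_k},p,q}^1[0\longleftrightarrow\infty]\ge c\,(p-p_c(\bbG_{n_k},q))$ whenever $p>p_c(\bbG_{n_k},q)$. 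Since $p-p_c(\bbG_{n_k},q)\to p-p^\star>0$, this gives $\phi_{\bbG_{n_k},p,q}^1[0\longleftrightarrow\infty]\ge\delta$ for all large $k$, where $\delta:=\tfrac12 c\,(p-p^\star)>0$.

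Next I would replace the non-local event $\{0\longleftrightarrow\infty\}$ by local ones. Let $\Lambda_R$ denote the ball of radius $R$ about the origin, and let $A_R$ be the increasing event — measurable with respect to the edges of $\Lambda_R$ — that $0$ is joined within $\Lambda_R$ to some vertex at distance $R$. An elementary path argument (truncating an open path at its first visit to distance $R$) shows $\{0\longleftrightarrow\infty\}\subseteq A_R$ on any graph and, more precisely, $\{0\longleftrightarrow\infty\}=\bigcap_{R\ge1}A_R$ as a decreasing intersection. Consequently $\phi_{\bbG_{n_k},p,q}^1[A_R]\ge\delta$ for every $R\ge1$ and every large $k$, and the task reduces to showing $\phi_{\bbG,p,q}^1[A_R]\ge\delta$ for all $R$.

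This transfer I would carry out using the comparison between boundary conditions available for $q\ge1$. Viewing $\Lambda_L$ as a finite subgraph of $\bbG$ with wired boundary, the measures $\phi_{\Lambda_L,p,q}^1$ are stochastically non-increasing in $L$ and converge weakly to $\phi_{\bbG,p,q}^1$, so $\phi_{\bbG,p,q}^1[A_R]=\inf_{L\ge R}\phi_{\Lambda_L,p,q}^1[A_R]$; the same holds on each $\bbG_{n_k}$, with $\Lambda_L^{(n_k)}$ denoting the ball of radius $L$ in $\bbG_{n_k}$. Now fix $R$ and $L>R$ and choose $k$ so large that the balls of radius $L+1$ in $\bbG_{n_k}$ and in $\bbG$ are isomorphic as rooted graphs (possible by local convergence); then $\Lambda_L$ together with its boundary coincides in the two graphs, so the two wired measures agree on $E(\Lambda_L)$, whence
$$\phi_{\Lambda_L,p,q}^1[A_R]\;=\;\phi_{\Lambda_L^{(n_k)},p,q}^1[A_R]\;\ge\;\phi_{\bbG_{n_k},p,q}^1[A_R]\;\ge\;\delta,$$
the middle inequality being the stochastic domination of the infinite-volume wired measure by the finite-volume one together with the fact that $A_R$ is increasing. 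The left-hand side does not depend on $k$, so $\phi_{\Lambda_L,p,q}^1[A_R]\ge\delta$ for all $L>R$; taking the infimum over $L$ gives $\phi_{\bbG,p,q}^1[A_R]\ge\delta$, and letting $R\to\infty$ gives $\phi_{\bbG,p,q}^1[0\longleftrightarrow\infty]\ge\delta>0$. Hence $p\ge p_c(\bbG,q)$, contradicting $p<p_c(\bbG,q)$.

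The main obstacle — and the only place where the mean-field bound is genuinely needed — is this last transfer. Since the infinite-volume wired measure is not determined by any bounded ball, one must both squeeze it between wired measures on finite balls (using $q\ge1$) and rely on a lower bound for $\phi_{\bbG_{n_k},p,q}^1[A_R]$ that is \emph{uniform in $k$}: knowing only $\phi_{\bbG_{n_k},p,q}^1[A_R]>0$ for each $k$ would not suffice, since the bound would then evaporate along the subsequence $k=k(L)\to\infty$ appearing in the last display. The remaining ingredients — the inclusion $\{0\longleftrightarrow\infty\}\subseteq A_R$, the agreement of wired measures on isomorphic finite balls, and the monotonicity in boundary conditions for $q\ge1$ — are standard, and I expect them to be routine.
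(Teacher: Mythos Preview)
Your proof is correct and follows essentially the same route as the paper's: both apply the mean-field lower bound of \cite{DumRaoTas17} on $\bbG_n$ to get a uniform positive lower bound on connection probabilities, transfer this to finite-volume wired measures on balls via the comparison between boundary conditions and the local isomorphism guaranteed by local convergence, and then let the radii tend to infinity. The only cosmetic differences are that you phrase it as a contradiction and are more explicit about the local event $A_R$, whereas the paper argues directly with the event $\{0\longleftrightarrow\partial B_r\}$ inside $B_R$.
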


\begin{proof}
Fix $q\ge1$ and drop it from the notation. In \cite{DumRaoTas17} (see also \cite{Dum17} for a statement in the nearest neighbor case), one proves that for an infinite transitive graph $\bbG$, any $q\ge1$ and $p\in[0,1]$,
\begin{equation}\label{eq:a}\phi_{\bbG,p}^1[0\longleftrightarrow\infty]\ge c(p-p_c(\bbG)),\end{equation}
where the constant $c>0$ depends a priori on $p$ and $\bbG$, but can be bounded uniformly from below as $p$ remains away from $0$ and $1$, and the degree of $\bbG$ remains bounded ($c$ also depends on $q$ but $q$ is fixed here). 
If the liminf is equal to 1, there is nothing to prove and we now choose $p$ such that 
$$1>p> \liminf_{n\rightarrow\infty}p_c(\bbG_n)=:\tilde p.$$ 
For any fixed $R>r>0$, pick $n$ large enough such that the ball $B_R$ of size $R$ in $\bbG_n$ is the same as in $\bbG$. By comparison between boundary conditions for the random-cluster model \cite[Lemma 4.14]{Gri06}, we deduce that 
\begin{align*}
\phi_{B_R,p}^1[0\longleftrightarrow\partial B_r]&\ge \phi_{\bbG_n,p}^1[0\longleftrightarrow\partial B_r]\ge \phi_{\bbG_n,p}^1[0\longleftrightarrow\infty]\ge c(p-p_c(\bbG_n)).
\end{align*}
(Above, $B_r$ denotes the ball of size $r$ around the origin.) In the last inequality, we used \eqref{eq:a} together with the observation that for every $n$ large enough, the ball of size 1 in $\bbG_n$ is isomorphic to the ball of size 1 in $\bbG$, and therefore the degrees of $\bbG_n$ and $\bbG$ are the same. As a consequence, the constant $c>0$ can be chosen independent of $n$. 

Taking the liminf implies that 
\begin{align*}
\phi_{B_R,p}^1[0\longleftrightarrow\partial B_r]&\ge c(p-\tilde p).
\end{align*}
Letting $R$ tend to infinity, the convergence of $\phi^1_{B_R,p}$ to $\phi^1_{\bbG,p}$  implies that
$$\phi_{\bbG,p}^1[0\longleftrightarrow\partial B_r]\ge c(p-\tilde p).$$
Letting $r$ tend to infinity concludes that 
$$\phi_{\bbG,p}^1[0\longleftrightarrow\infty]\ge c(p-\tilde p)>0,$$
which implies that $p\ge p_c(\bbG)$. The claim follows.
\end{proof}
Exactly as in the case of Bernoulli percolation, the difficult part is to prove that 
$$\limsup_{n\rightarrow\infty}p_c(\bbG_n,q)\le p_c(\bbG,q).$$
In particular, this raises the following natural question.\begin{question}
Extend the locality results known for Bernoulli percolation to the random-cluster model with cluster-weight $q\ge1$.
\end{question}
In particular, the highly non-amenable case would be of great interest.
\paragraph{Quotient graphs}
Quotient graphs appear naturally when studying local limits of graphs. Let $\Gamma$ be a normal subgroup of a group of automorphisms of $\bbG=(\bbV,\bbE)$ acting transitively on $\bbV$. The quotient graph $\bbG/\Gamma$ is the (transitive, locally-finite) graph with vertex-set given by the equivalence classes $\{\Gamma v:v\in \bbV\}$ and edge-set given by the $\{\Gamma u,\Gamma v\}$ such that there exist $u_0\in\Gamma u$ and $v_0\in\Gamma v$ with $\{u_0,v_0\}\in\bbE$.

In \cite{bs1}, Benjamini and Schramm mentioned several questions regarding inequalities between the critical parameters of a graph and its quotient. We would like to highlight the fact that here the inequalities are not obvious at all in our case.
\begin{question}
Is it always the case that $p_c(\bbG,q)\le p_c(\bbG/\Gamma,q)$? If not, find a counter-example. If yes, when is the inequality strict?
\end{question}

In order to go back to our question on locality, notice that we may produce sequences of graphs converging to $\bbG$ by considering quotients by smaller and smaller groups of automorphisms. Actually, when restricted to Cayley graphs of finitely \emph{presented} groups, the local convergence is always by quotient (this is explained in detail in \cite{MarTas} for abelian groups). For this reason, understanding the relation between percolation on a graph and its quotient is important toward a better understanding of Schramm's locality conjecture.

\section{A special case of Schramm's conjecture} 

A simple example of sequence $(\bbG_n)$ converging to a graph $\bbG$ is provided by the graphs $\bbZ^r\times(\bbZ/n\bbZ)^{d-r}$ that converge to $\bbZ^d$ as $n$ tends to infinity. For this graph, we obtain the following result.
\begin{theorem}\label{thm:main}
Fix an integer $q\ge2$ and $d>r\ge3$, then 
$$\lim_{n\rightarrow\infty} p_c(\bbZ^r\times(\bbZ/2n\bbZ)^{d-r},q)=p_c(\bbZ^d,q).$$
\end{theorem}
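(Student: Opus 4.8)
\medskip
\noindent\emph{Plan of proof.}
The lower bound $\liminf_{n\to\infty}p_c(\bbT_n,q)\ge p_c(\bbZ^d,q)$, where $\bbT_n:=\bbZ^r\times(\bbZ/2n\bbZ)^{d-r}$, is immediate from Proposition~\ref{prop:1}: the graphs $\bbT_n$ converge locally to $\bbZ^d$, and $p_c(\bbT_n,q)\le p_c(\bbZ^r,q)<1$ since $\bbZ^r$ (recall $r\ge 2$) is a subgraph of $\bbT_n$. The plan for the matching upper bound is to show that for every $p>p_c(\bbZ^d,q)$ one has $\liminf_{n\to\infty}\phi^1_{\bbT_n,p,q}[0\longleftrightarrow\infty]>0$; letting then $p\downarrow p_c(\bbZ^d,q)$ gives $\limsup_{n}p_c(\bbT_n,q)\le p_c(\bbZ^d,q)$. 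So we fix such a $p$ --- which we may also take outside the countable set $\{p_c(\bbT_n,q):n\ge1\}$ --- and write $\theta_n:=\phi^1_{\bbT_n,p,q}[0\longleftrightarrow\infty]$ and $\tau_n(z):=\phi^1_{\bbT_n,p,q}[0\longleftrightarrow z]$; by the FKG inequality and uniqueness of the infinite cluster, $\tau^{\mathrm{tr}}_n(z):=\tau_n(z)-\theta_n^2\ge0$.

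\medskip
The engine of the argument is the infrared bound. Since $q\ge2$ is an integer, the Edwards--Sokal coupling relates $\phi^1_{\bbT_n,p,q}$ to the $q$-state Potts model, and $\bbT_n$ is reflection positive with respect to reflections through hyperplanes --- this is exactly where the evenness of the torus side-lengths $2n$ matters, since it provides the reflections through both vertex- and edge-hyperplanes in the torus directions. Reflection positivity then gives, with $\widehat{E}(k):=\sum_{i=1}^d(1-\cos k_i)$ and $\widehat{\bbT_n}:=[-\pi,\pi]^r\times(\tfrac{\pi}{n}\bbZ/2\pi\bbZ)^{d-r}$,
\[
0\ \le\ \widehat{\tau^{\mathrm{tr}}_n}(k)\ \le\ \frac{\calC}{\widehat{E}(k)}\qquad\text{for }k\in\widehat{\bbT_n}\setminus\{0\},
\]
with a constant $\calC=\calC(p,q)$ \emph{independent of $n$} (it involves only $p$ and $q$); the transform $\widehat{\tau^{\mathrm{tr}}_n}$ is well-defined (one may first work in finite volume and pass to the limit afterwards, or invoke the exponential decay of the truncated connectivity off criticality). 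This is the one place where $r\ge 3$ enters: since $\widehat{E}(k_1,k_2)\ge\widehat{E}(k_1,0)$ and $\widehat{E}(k_1,0)\asymp|k_1|^2$ near the origin of $[-\pi,\pi]^r$, the normalised $L^s(\widehat{\bbT_n})$-norm of $\widehat{E}^{-1}$ is at most $(2\pi)^{-r}\int_{[-\pi,\pi]^r}\widehat{E}(k_1,0)^{-s}\,dk_1$, finite and independent of $n$ precisely when $s<r/2$; as $r\ge 3$ one can choose such an $s$ with $s\le 2$. With $p^*:=s/(s-1)\in[2,\infty)$, the Hausdorff--Young inequality then gives
\[
M\ :=\ \sup_{n}\ \big\|\,\tau^{\mathrm{tr}}_n\,\big\|_{\ell^{p^*}(\bbT_n)}\ \le\ \calC\,\sup_{n}\ \big\|\,\widehat{E}^{-1}\,\big\|_{L^{s}(\widehat{\bbT_n})}\ <\ \infty .
\]
It should be stressed that there is no short-cut here: applying the infrared bound on $\bbT_n$ on its own only produces long-range order when $p$, or $d$, is large enough, in the spirit of Fr\"ohlich--Simon--Spencer. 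The point of the displayed bound is not to re-prove percolation on $\bbT_n$ from scratch, but to prevent $\tau^{\mathrm{tr}}_n$ from escaping to infinity as $n\to\infty$.

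\medskip
The last step is to feed in local convergence. For each fixed $z$, comparison between boundary conditions (\cite[Lemma 4.14]{Gri06}) applied to boxes $B_R\subseteq\bbT_n$ yields $\tau_n(z)\ge\phi^0_{B_R,p,q}[0\longleftrightarrow z]$ for all $R<n$; letting $n\to\infty$ and then $R\to\infty$ gives $\liminf_n\tau_n(z)\ge\phi^0_{\bbZ^d,p,q}[0\longleftrightarrow z]$, which --- since $p>p_c(\bbZ^d,q)=p^0_c(\bbZ^d,q)$ --- is at least $c_0:=\bigl(\phi^0_{\bbZ^d,p,q}[0\longleftrightarrow\infty]\bigr)^2>0$, uniformly in $z$. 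Now suppose, for contradiction, that $\liminf_n\theta_n=0$. Along a subsequence on which $\theta_n\to 0$ and (after a diagonal extraction) $\tau_n(z)\to\tau_*(z)\ge c_0$ for every $z$, we have $\tau^{\mathrm{tr}}_n(z)\to\tau_*(z)$, so Fatou's lemma and the bound above force $\|\tau_*\|_{\ell^{p^*}(\bbZ^d)}\le M<\infty$; this is absurd, since $\tau_*(z)\ge c_0>0$ for \emph{every} $z\in\bbZ^d$. Hence $\liminf_n\theta_n>0$, as required. The real obstacles in this scheme are the two structural inputs used as black boxes --- the reflection-positivity/infrared bound for $\bbT_n$ with a constant uniform in $n$, and the matching of the infrared singularity of $\widehat{E}^{-1}$ against the dimension $r\ge 3$ in the Hausdorff--Young estimate; the remaining ingredients (well-definedness of the transform via exponential decay off criticality, and $p^0_c(\bbZ^d,q)=p_c(\bbZ^d,q)$) are classical.
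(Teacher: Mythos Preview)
Your argument is correct and rests on the same two pillars as the paper's --- Proposition~\ref{prop:1} for the lower bound and the reflection-positivity/infrared bound for the upper bound --- but the implementation of the second step is genuinely different. The paper works directly on the finite torus $\bbT_{N,n}=(\bbZ/2N\bbZ)^r\times(\bbZ/2n\bbZ)^{d-r}$ and applies the position-space form of the infrared bound (Lemma~\ref{cor:estimate}) to the single test vector $v_x=\tfrac{1}{|\bbT_{N,n}|}-\tfrac{1}{|E|}\mathbbm 1[x\in E]$, obtaining
\[
\frac{1}{|E|^2}\sum_{x,y\in E}\phi_{\bbT_{N,n}}[x\longleftrightarrow y]\;-\;\frac{1}{|\bbT_{N,n}|}\sum_{y}\phi_{\bbT_{N,n}}[0\longleftrightarrow y]\;\le\;\frac{q-1}{2\beta}\,\frac{1}{|E|^2}\sum_{x,y\in E}\mathsf G_{N,n}(x,y).
\]
After sending $N\to\infty$ and then $n\to\infty$, comparison of boundary conditions turns the left-hand side into $\phi^0_{\bbZ^d}[0\longleftrightarrow\infty]^2-\limsup_n\theta_n^2$, while the right-hand side is killed by letting $|E|\to\infty$ using only the pointwise decay $\mathsf G(x,y)\to0$ (bare transience). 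You instead pass to Fourier, invoke Hausdorff--Young to extract a uniform $\ell^{p^*}$ bound on the truncated two-point function, and close by a Fatou/contradiction argument. The paper's route is slightly more elementary (no Hausdorff--Young, and it never needs summability of $\tau^{\mathrm{tr}}_n$); yours yields the stronger quantitative byproduct $\sup_n\|\tau^{\mathrm{tr}}_n\|_{\ell^{p^*}(\bbT_n)}<\infty$.

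Two small remarks on your write-up. First, the infrared bound is for the periodic measure $\phi_{\bbT_{N,n}}$, not a priori for $\phi^1_{\bbT_n}$, so the finite-volume route you allude to is really the one to take; the countable set of $p$ to avoid is not just $\{p_c(\bbT_n,q)\}$ but the (still countable) set where $\phi^0_{\bbT_n}\ne\phi^1_{\bbT_n}$ for some $n$, so that the torus limit agrees with $\phi^1_{\bbT_n}$. Second, the alternative you mention --- exponential decay of the supercritical truncated connectivity on $\bbT_n$ --- is in fact open for $q\ge3$ (it is essentially what Question~\ref{question:1} would deliver), so only your finite-volume option is available.
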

On the one hand, there are two noteworthy restrictions to our theorem: we do not treat the natural case of $r=2$, and more importantly we are bound to integer values of $q\ge2$. This second restriction is important. Indeed, the random-cluster models with integer values of $q\ge2$ enjoy some additional properties that are wrong for Bernoulli percolation. We believe these additional properties to be very interesting, and we would therefore like to encourage the reader to pursue potential applications (see the discussion below the proof of the theorem).

On the other hand, the proof of the theorem is very short. This is quite surprising since the similar result for Bernoulli percolation (i.e.~$q=1$) is not that simple to obtain: it relies on  Grimmett and Marstrand's celebrated result \cite{GriMar90} (see \cite{lss}  for the explanation of how this result implies the one above for $q=1$).

This also suggests that Schramm's locality conjecture may be simpler to obtain for integer values of $q\ge2$ than for $q=1$. This immediately raises the following question.
\begin{question}
Is Schramm's locality conjecture for a certain value of $q\ge1$ implying the conjecture for other values of $q$?
\end{question}
To motivate this question, let us mention that Benjamini and Schramm \cite{bs1} mentioned several questions regarding the existence of a phase transition for Bernoulli percolation on general graphs, i.e.~whether $p_c(\bbG,1)<1$ or not. It can be easily proved using monotonocity arguments (see \cite[Eq.~(5.5)]{Gri06}) that for all $q>0$, $p_c(\bbG,1)<1$ if and only if $p_c(\bbG,q)<1$. In this case, the original questions of \cite{bs1} can be translated directly into equivalent questions for random-cluster models. 

\section{Proof of Theorem~\ref{thm:main}}
\label{sec:proof-theorem}
We will rely heavily on the connection between the random-cluster model with integer cluster-weight and the Potts model. The Potts model is one of the most fundamental examples of a lattice spin model and studying its properties near its phase transition is an active topic of research; see e.g.~\cite{Dum17} for a recent account. Fix an integer $q\ge2$ and introduce a polyhedron $\Omega\subset\bbR^{q-1}$ with $q$ elements (often interpreted as colors) satisfying that for any ${\rm a},{\rm b}\in \Omega$, ${\rm a}\cdot {\rm b}$ is equal to $1$ if ${\rm a}={\rm b}$ and $-1/(q-1)$ otherwise, where $\cdot$ denotes the scalar product on $\bbR^{q-1}$.

Let $G=(V,E)$ be a finite subgraph of $\mathbb G$ and $\beta>0$. The {\em $q$-state Potts model on
  $G$ at inverse-temperature $\beta>0$ with monochromatic boundary conditions ${\rm b}\in\Omega$} is defined as follows. The {\em energy} of a configuration
$\sigma=(\sigma_x:x\in G)\in \Omega^V$ is given by the Hamiltonian
\begin{equation*}\label{eq:def}H_G^{\rm b}(\sigma)~:=~-\sum_{\substack{x,y\in V\\ \{x,y\}\in E}} \sigma_x\cdot \sigma_y-\sum_{\substack{x\in V,y\notin V\\ \{x,y\}\in \bbE}}\sigma_x\cdot {\rm b}\end{equation*} 
and the probability measure 
$\langle\cdot\rangle_{G,\beta,q}^{\rm b}$ is defined by
$$ \langle X\rangle_{G,\beta,q}^{\rm b}~:=~\frac{\displaystyle \sum_{\sigma\in\Omega^V}X(\sigma)\exp[-\beta H_G^{\rm b}(\sigma)]}{\displaystyle\sum_{\sigma\in\Omega^V}\exp[-\beta H_G^{\rm b}(\sigma)]}$$for every $X:\Omega^V\longrightarrow\bbR$.

The Potts model on $\mathbb G$ can be defined by taking the weak limit of measures on a nested sequence of finite graphs, exactly as for the random-cluster model. The infinite-volume measure is denoted by $\langle\cdot\rangle_{\bbG,\beta,q}^{\rm b}$. The model undergoes a phase transition between absence/existence of long-range order at the so-called critical inverse temperature $\beta_c(\mathbb G,q)$ defined by
$$m(\beta,\bbG,q):=\langle\sigma_0\cdot{\rm b}\rangle_{\bbG,\beta,q}^{\rm b}\begin{cases}=0&\text{ if $\beta<\beta_c(\mathbb  G,q)$,}\\ >0&\text{ if $\beta>\beta_c(\mathbb G,q)$.}\end{cases}$$
The Potts model is coupled to the random-cluster model, see \cite[Sec 1.4]{Gri06} in such a way that 
$$m(\beta,\bbG,q)=\phi^1_{\bbG,p,q}[0\longleftrightarrow\infty]$$
when $\beta=-\tfrac{q-1}q\log(1-p)$.
We therefore deduce that 
\begin{equation}\beta_c(\bbG,q):=-\tfrac{q-1}q\log(1-p_c(\bbG,q))\label{eq:h}\end{equation}
so that Theorem~\ref{thm:main} follows from the following result.
\begin{theorem}\label{thm:main2}
For integers $q\ge2$ and $d>r\ge3$, 
$$\lim_{n\rightarrow \infty} \beta_c(\bbZ^r\times(\bbZ/2n\bbZ)^{d-r},q)~=~\beta_c(\bbZ^d,q).$$
\end{theorem}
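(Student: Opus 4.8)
The plan is to deduce the lower bound $\liminf_{n\to\infty}\beta_c(\mathbb G_n,q)\ge\beta_c(\bbZ^d,q)$ directly from Proposition~\ref{prop:1} and the strictly increasing dictionary \eqref{eq:h} (here and below $\mathbb G_n:=\bbZ^r\times(\bbZ/2n\bbZ)^{d-r}$), and to obtain the matching upper bound $\limsup_{n\to\infty}\beta_c(\mathbb G_n,q)\le\beta_c(\bbZ^d,q)$ from the infrared bound. Suppose the latter fails: then there are $\beta$ and a subsequence $n_k\to\infty$ with $\beta_c(\bbZ^d,q)<\beta<\beta_c(\mathbb G_{n_k},q)$ for all $k$. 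On $\bbZ^d$ the model is supercritical, so $m:=m(\beta,\bbZ^d,q)>0$; moreover $\beta\ne\beta_c(\bbZ^d,q)$ gives a unique infinite-volume Gibbs state, for which (via the coupling with the random-cluster model) $\langle\sigma_0\cdot\sigma_x\rangle_{\bbZ^d,\beta,q}=\phi_{\bbZ^d,p,q}[0\longleftrightarrow x]\to m^2$ as $|x|\to\infty$. On each $\mathbb G_{n_k}$ the model is strictly subcritical; by sharpness of the phase transition for $q\ge1$ the Gibbs state is again unique, $S_k(x):=\langle\sigma_0\cdot\sigma_x\rangle_{\mathbb G_{n_k},\beta,q}$ decays exponentially, and $\sum_x S_k(x)<\infty$. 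I will contradict $m>0$.

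The point of restricting to integer $q\ge2$ is that the Potts model is then genuinely reflection positive --- unlike Bernoulli percolation --- so that on $\mathbb G_{n_k}$ one has the infrared bound: using reflections through hyperplanes of the $\bbZ^r$-part and through vertices and edges of the even cycles $\bbZ/2n_k\bbZ$, and passing to infinite volume along the $\bbZ^r$-directions,
\begin{equation}\label{eq:IR}
\widehat{S_k}(p,\theta)\ \le\ \frac{c_q}{\beta\,E(p,\theta)},\qquad E(p,\theta):=2\sum_{j=1}^r(1-\cos p_j)+2\sum_{l=1}^{d-r}(1-\cos\theta_l),
\end{equation}
for all $(p,\theta)\ne 0$ in the dual group $[-\pi,\pi]^r\times(\tfrac{\pi}{n_k}\bbZ/2\pi\bbZ)^{d-r}$ of $\mathbb G_{n_k}$, with $c_q$ depending only on $q$. (On the torus $\langle\sigma_0\rangle=0$ by colour symmetry, so the truncated and full two-point functions agree, and this persists in the subcritical limit.) The second ingredient is the sum rule obtained by Fourier inversion at the origin:
\begin{equation}\label{eq:SR}
1=\langle\sigma_0\cdot\sigma_0\rangle_{\mathbb G_{n_k},\beta,q}=\int_{[-\pi,\pi]^r}\frac{dp}{(2\pi)^r}\,\frac{1}{(2n_k)^{d-r}}\sum_{\theta}\widehat{S_k}(p,\theta).
\end{equation}

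Fix a smooth, $2\pi\bbZ^d$-periodic $\chi_\delta$ with $\chi_\delta\equiv0$ on the ball $B_\delta$ around $0$ and $\chi_\delta\equiv1$ off $B_{2\delta}$, and split the right-hand side of \eqref{eq:SR} as $A_k^\delta+B_k^\delta$ according to the weights $1-\chi_\delta$ and $\chi_\delta$. Since $E(p,\theta)\asymp|p|^2+|\theta|^2$ near $0$, the bound \eqref{eq:IR} gives $A_k^\delta\le\frac{c_q}{\beta}\int_{B_{2\delta}}E^{-1}\le C\delta^{\,r-2}$, uniformly in $k$; this estimate --- and this is precisely where the hypothesis $r\ge3$ enters --- relies on $|p|^{-2}$ being integrable near the origin of $\bbR^r$, which fails for $r=1,2$. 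On the other hand $B_k^\delta=\sum_{x\in\mathbb G_{n_k}}S_k(x)\,\widehat\chi_k(x)$, where $\widehat\chi_k$ is the inverse Fourier transform of $\chi_\delta$ on $\mathbb G_{n_k}$; smoothness of $\chi_\delta$ gives $|\widehat\chi_k(x)|\le C_a(1+\|x\|)^{-a}$ for every $a$, uniformly in $k$ (in the torus directions the decay is measured in the torus distance, which agrees with the $\bbZ^d$-distance on canonical representatives). Since $\mathbb G_{n_k}\to\bbZ^d$ locally and the Gibbs states are unique on both sides at $\beta$, one has $S_k(x)\to\langle\sigma_0\cdot\sigma_x\rangle_{\bbZ^d,\beta,q}$ for each fixed $x$ (the usual sandwich between free and wired boundary conditions under local convergence), so dominated convergence yields
$$\lim_{k\to\infty}B_k^\delta=\sum_{x\in\bbZ^d}\langle\sigma_0\cdot\sigma_x\rangle_{\bbZ^d,\beta,q}\,\widehat\chi_\infty(x),\qquad \widehat\chi_\infty(x):=\int_{[-\pi,\pi]^d}\chi_\delta(q)e^{-iq\cdot x}\frac{dq}{(2\pi)^d}.$$
Writing $\langle\sigma_0\cdot\sigma_x\rangle_{\bbZ^d,\beta,q}=m^2+\tau(x)$ with $\tau(x)\to0$, and using $\sum_x\widehat\chi_\infty(x)=\chi_\delta(0)=0$, this limit equals $\sum_x\tau(x)\widehat\chi_\infty(x)=\int_{[-\pi,\pi]^d}\chi_\delta\,d\widehat\tau$, where $\widehat\tau\ge0$ is the spectral measure of the positive-definite function $\tau$; as $0\le\chi_\delta\le1$ this is at most $\widehat\tau([-\pi,\pi]^d)=\tau(0)=1-m^2$. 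Combining with \eqref{eq:SR} and the estimate on $A_k^\delta$, and letting $k\to\infty$ then $\delta\to0$, we obtain $1\le 1-m^2$, hence $m=0$, a contradiction. This proves $\limsup_{n\to\infty}\beta_c(\mathbb G_n,q)\le\beta_c(\bbZ^d,q)$ and, together with the lower bound, the theorem.

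The delicate step is the high-frequency limit: one must know that local convergence of the graphs upgrades to convergence of the two-point functions --- which rests on uniqueness of the random-cluster Gibbs measure off criticality on $\bbZ^d$ and on all the $\mathbb G_{n_k}$, itself a consequence of sharpness --- and one must control the Fourier coefficients $\widehat\chi_k$ uniformly in $k$ while the $\theta$-variable ranges over a finite group of growing size. Everything else (the infrared bound, the sum rule, the low-frequency estimate) is soft; in particular, and in contrast with the case $q=1$, no renormalization is needed.
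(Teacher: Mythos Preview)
Your overall strategy---lower bound from Proposition~\ref{prop:1}, upper bound from the infrared bound---is the paper's. But the implementation of the upper bound is genuinely different, and your version contains a real gap.

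\medskip
\noindent\textbf{The gap.} You assert that ``$\beta\ne\beta_c(\bbZ^d,q)$ gives a unique infinite-volume Gibbs state'' and later that uniqueness of the random-cluster measure off criticality on $\bbZ^d$ is ``a consequence of sharpness''. Neither claim is correct. For the Potts model at $\beta>\beta_c$ there are always at least $q$ extremal Gibbs states, so Potts uniqueness is simply false supercritically. What you actually need is $\phi^0_{\bbZ^d,p,q}=\phi^1_{\bbZ^d,p,q}$ at your chosen $p>p_c$: without it, the sandwich you invoke only gives
$\phi^0_{\bbZ^d}[0\leftrightarrow x]\le\liminf_k S_k(x)\le\limsup_k S_k(x)\le\phi^1_{\bbZ^d}[0\leftrightarrow x]$,
and since $\widehat\chi_k$ changes sign you cannot run dominated convergence on $B_k^\delta$. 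Sharpness \cite{DumRaoTas17} yields exponential decay for $p<p_c$ and the mean-field lower bound for $p>p_c$; it does \emph{not} establish $\phi^0=\phi^1$ above $p_c$, which for $q\ge3$ and $d\ge3$ is open (this is essentially the content of the questions in the final section of the paper). The standard repair is to recall that the set $\{p:\phi^0_{\bbZ^d,p,q}\ne\phi^1_{\bbZ^d,p,q}\}$ is at most countable \cite[Thm~4.63]{Gri06}; since you argue by contradiction and have a whole interval $(\beta_c(\bbZ^d),\limsup_n\beta_c(\mathbb G_n))$ at your disposal, you may pick $\beta$ avoiding those values. You should say this explicitly.

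\medskip
\noindent\textbf{Comparison with the paper's argument.} The paper never passes to the infinite slab before applying reflection positivity and never needs $\phi^0=\phi^1$. It stays on the finite torus $\bbT_{N,n}=(\bbZ/2N\bbZ)^r\times(\bbZ/2n\bbZ)^{d-r}$ and uses the infrared bound in its real-space (test-vector) form of Lemma~\ref{cor:estimate} with
\[
v_x=\frac1{|\bbT_{N,n}|}-\frac1{|E|}\mathbbm 1[x\in E]
\]
for a fixed finite $E\subset\bbZ^d$. This produces directly an inequality $\mathsf S_{N,n}-\mathsf T_{N,n}\le\tfrac{q-1}{2\beta}\,\mathsf U_{N,n}$, where $\mathsf U_{N,n}$ is a finite average of torus Green functions. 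The two left-hand terms are then controlled purely by comparison of boundary conditions: $\liminf\mathsf S_{N,n}\ge\phi^0_{\bbZ^d}[0\leftrightarrow\infty]^2$ and $\limsup\mathsf T_{N,n}\le\limsup_n\phi^1_{\bbT_{\infty,n}}[0\leftrightarrow\infty]^2$. Letting $N\to\infty$, $n\to\infty$, and finally $|E|\to\infty$ (transience of the simple random walk on $\bbZ^d$ kills $\mathsf U$) yields $\phi^0_{\bbZ^d}[0\leftrightarrow\infty]\le\limsup_n\phi^1_{\bbT_{\infty,n}}[0\leftrightarrow\infty]$, hence the result. This route is shorter and avoids all of your Fourier-analytic machinery: no cut-off $\chi_\delta$, no uniform-in-$k$ decay estimates for $\widehat\chi_k$ on a varying group, no spectral measure for $\tau$, and no infinite-volume infrared bound on the slab. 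What your approach buys, if the gap is patched, is a more quantitative statement (you essentially control how the magnetization gap is bounded by a Green-function integral), but at the price of considerably more analysis.
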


The main tool used for the proof of this theorem is the so-called infrared bound. 
Define $\bbT_{N,n}:=(\bbZ/2N\bbZ)^r\times(\bbZ/2n\bbZ)^{d-r}$. Following \cite[Eq.~(3.18)]{Bis09}, introduce  the Green function $\mathsf G_{N,n}$ on the torus $\mathbb T_{N,n}$ by
\begin{equation}\label{eq:2}
\forall x,y\in \mathbb T_{N,n}\qquad  \mathsf G_{N,n}(x,y):=\frac 1{|\mathbb T_{N,n}|}\sum_{\mathbf k\in \mathbb T_{N,n}^*\setminus\{0\}}\frac {e^{\mathbf k\cdot(\mathbf x-\mathbf y)}} {1-\phi(\mathbf k )}, 
\end{equation}
where $\phi(\mathbf k)=\tfrac1d\sum_{j=1}^d \cos(k_j)$ is the characteristic function associated with the simple random walk in dimension $d$, and $$\mathbb T_{N,n}^*:=[\tfrac{2\pi}{N}(\bbZ/2N\bbZ)]^r\times[\tfrac{2\pi}n(\bbZ/2n\bbZ)]^{d-r}.$$
\begin{lemma}\label{cor:estimate}
Consider $\beta>0$ and two integers $n$ and $N$. For any
$v\in\bbR^{\bbT_{N,n}}$ with $\sum_{x}v_{x}=0$,
\begin{equation*}\label{eq:11}\sum_{x,y\in\bbT_{N,n}}v_{x}v_{y}\ \langle \sigma_x\cdot\sigma_y\rangle_{\bbT_{N,n},\beta}~\le ~\tfrac{q-1}{2\beta}\sum_{x,y\in\bbT_{N,n}}v_{x}v_{y}\ \mathsf G_{N,n}(x,y).
\end{equation*}
\end{lemma}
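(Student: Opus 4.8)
The statement is the infrared bound for the Potts model on the finite torus $\bbT_{N,n}$, and the plan is to derive it from reflection positivity together with the associated Gaussian-domination bound. The first point is that, since $\sigma_x\cdot\sigma_x=1$ for every $x$, one has $\sigma_x\cdot\sigma_y=1-\tfrac12|\sigma_x-\sigma_y|^2$, so on $\bbT_{N,n}$ the Potts weight $\exp[\beta\sum_{\{x,y\}}\sigma_x\cdot\sigma_y]$ is proportional to $\exp[-\tfrac\beta2\sum_{\{x,y\}}|\sigma_x-\sigma_y|^2]$. This is an interaction of ``gradient'' type, and since all side-lengths of $\bbT_{N,n}$ are even, the torus carries the reflections (through sites and through bonds) needed for reflection positivity; the resulting Gaussian-domination bound for the Potts model is classical (Fr\"ohlich--Simon--Spencer; see also \cite{Bis09}). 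Concretely, for every $h\colon\bbT_{N,n}\to\bbR^{q-1}$,
$$Z(h)\ :=\ \sum_{\sigma\in\Omega^{\bbT_{N,n}}}\exp\Big[-\tfrac\beta2\sum_{\{x,y\}}\big|\sigma_x-\sigma_y-(h_x-h_y)\big|^{2}\Big]\ \le\ Z(0).$$

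Next I would run the standard second-derivative argument. Fix $g\colon\bbT_{N,n}\to\bbR^{q-1}$, put $D_g(\sigma):=\sum_{\{x,y\}}(\sigma_x-\sigma_y)\cdot(g_x-g_y)$ and $E_g:=\sum_{\{x,y\}}|g_x-g_y|^{2}$, and expand in $t$: $\log Z(tg)=\mathrm{const}-\tfrac\beta2 t^{2}E_g+\log\langle e^{\beta t D_g}\rangle_{\bbT_{N,n},\beta}$. Because $t\mapsto Z(tg)$ is maximal at $t=0$, its first derivative there vanishes (automatically, since $\langle D_g\rangle_{\bbT_{N,n},\beta}=0$ by translation invariance of the torus measure) and its second derivative is $\le 0$, which gives
$$\mathrm{Var}_{\bbT_{N,n},\beta}\big(D_g\big)\ \le\ \tfrac1\beta\,E_g.$$

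The last step is to read this as the statement of the lemma. Summation by parts yields $D_g(\sigma)=\sum_x\sigma_x\cdot(Lg)(x)$ and $E_g=\sum_{x,y}L(x,y)\,g(x)\cdot g(y)$, where $L$ is the graph Laplacian of $\bbT_{N,n}$; on this finite connected graph $L$ restricts to a bijection of the space of functions with zero sum. Given $v\in\bbR^{\bbT_{N,n}}$ with $\sum_x v_x=0$ and a unit vector $e\in\bbR^{q-1}$, apply the variance bound to $g(x):=(L^{-1}v)(x)\,e$; then $D_g=\sum_x v_x(\sigma_x\cdot e)$, the mean term in the variance drops (using $\sum_x v_x=0$ and translation invariance), and one gets $\beta\sum_{x,y}v_xv_y\,\langle(\sigma_x\cdot e)(\sigma_y\cdot e)\rangle_{\bbT_{N,n},\beta}\le\sum_{x,y}v_xv_y\,(L^{-1})(x,y)$. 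Averaging over an orthonormal basis $e_1,\dots,e_{q-1}$ of $\bbR^{q-1}$ and using $\sum_i(\sigma_x\cdot e_i)(\sigma_y\cdot e_i)=\sigma_x\cdot\sigma_y$ together with the $S_q$-symmetry of $\Omega$ (which acts irreducibly on $\bbR^{q-1}$, so $\langle(\sigma_x\cdot e_i)(\sigma_y\cdot e_i)\rangle_{\bbT_{N,n},\beta}$ is independent of $i$) gives $\langle(\sigma_x\cdot e)(\sigma_y\cdot e)\rangle_{\bbT_{N,n},\beta}=\tfrac1{q-1}\langle\sigma_x\cdot\sigma_y\rangle_{\bbT_{N,n},\beta}$, hence $\sum_{x,y}v_xv_y\langle\sigma_x\cdot\sigma_y\rangle_{\bbT_{N,n},\beta}\le\tfrac{q-1}\beta\sum_{x,y}v_xv_y(L^{-1})(x,y)$. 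Finally, Fourier analysis on the torus identifies $L^{-1}$ on zero-sum functions with $\tfrac1{2d}\mathsf G_{N,n}$: indeed $L=2d(\mathrm{Id}-P)$ with $P$ the simple-random-walk operator (symbol $\phi$), while $\mathsf G_{N,n}$ is by definition the kernel of $(\mathrm{Id}-P)^{-1}$ on zero-sum functions. Since $\mathsf G_{N,n}$ is positive-definite there, this yields the claimed inequality (in fact with the better constant $\tfrac{q-1}{2d\beta}$).

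The only genuinely non-trivial input is reflection positivity of the Potts model on $\bbT_{N,n}$, which is classical; everything after that is bookkeeping. The two places to be careful are the normalization linking $L$, $\mathrm{Id}-P$ and $\mathsf G_{N,n}$ (equivalently, keeping track of the Fourier constants), and the representation-theoretic fact responsible for the factor $\tfrac1{q-1}$. If one prefers a shorter write-up, an alternative is to quote the infrared bound directly in momentum space from \cite{Bis09} and transcribe it to real space by testing against a zero-sum function $v$.
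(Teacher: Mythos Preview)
Your argument is the standard reflection-positivity derivation and is correct; in fact the paper does not supply any proof of this lemma at all, merely citing \cite{FroSimSpe76,FroSpe81,Bis09} and stating that the inequality ``follows from reflection positivity''. So you are providing strictly more than the paper does, and along exactly the route the authors have in mind. Two small remarks: your justification that $\langle D_g\rangle=0$ via ``translation invariance'' really uses the global $S_q$ spin symmetry (translation invariance alone does not force $\langle\sigma_x\rangle=0$), though your alternative argument using $\sum_x v_x=0$ is fine for the variance step; and your observation that the proof actually yields the sharper constant $\tfrac{q-1}{2d\beta}$ is correct---the paper's $\tfrac{q-1}{2\beta}$ is simply a convenient weakening, valid because $\mathsf G_{N,n}$ is positive semidefinite on zero-sum functions.
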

This inequality follows from reflection positivity. It originated in the works \cite{FroSimSpe76,FroSpe81} and has since then been extremely useful in statistical physics. We do not include the proof of this result nor discuss it any further. We refer to \cite{Bis09} for a good review on the subject.

Note that the fact that the widths of the torus in the previous lemma have to be even is the reason why our theorem is restricted to $\bbZ^r\times(\bbZ/2n\bbZ)^{d-r}$ and not simply to $\bbZ^r\times(\bbZ/n\bbZ)^{d-r}$.

Even though we do not use them here, we mention for completeness that \cite{Aba} provides bounds on $\mathsf G_{N,n}(x,y)$ that are valid for finite $N$ and $n$.

\begin{proof}[Theorem~\ref{thm:main2}] We use the notation $\bbT_{N,n}$ from above and set $\bbT_{\infty,n}:=\bbZ^r\times(\bbZ/2n\bbZ)^{d-r}$. 
Below, we fix $\beta$ and $q$ and drop them from the notation. 

Fix a finite subset $E$ of $\bbZ^d$ and see $E$ as a subset of $\bbT_{N,n}$ provided that $N$ and $n$ are sufficiently large to contain it (injectively). Similarly, we see 0 as a vertex of $\bbT_{N,n}$.

Note that $$\sum_{y\in \mathbb T_{N,n}}G_{N,n}(\cdot,y)= \sum_{x\in \mathbb T_{N,n}}G_{N,n}(x,\cdot)=0\quad\text{and}\quad \sum_{y\in \mathbb T_{N,n}}\langle\sigma_0\cdot\sigma_y\rangle_{\bbT_{N,n}}=\sum_{y\in \mathbb T_{N,n}}\langle\sigma_x\cdot\sigma_y\rangle_{\bbT_{N,n}}$$
for every $x,y\in \bbT_{N,n}$. We may therefore apply Lemma~\ref{cor:estimate} in $\bbT_{N,n}$ to $v\in\bbR^{\bbT_{N,n}}$ defined for every $x\in\bbT_{N,n}$ by
 $$v_x=\frac1{|\mathbb T_{N,n}|}-\frac1{|E|}\mathbbm 1[x\in E]$$
to get that
 \begin{align}\label{eq:argh}\underbrace{\frac1{|E|^2}\sum_{\substack{x,y\in E}}\langle \sigma_x\cdot\sigma_y\rangle_{\bbT_{N,n}}}_{\mathsf S_{N,n}}~-~\underbrace{\frac1{|\bbT_{N,n}|}\sum_{y\in \mathbb T_{N,n}}\langle\sigma_0\cdot\sigma_y\rangle_{\bbT_{N,n}}}_{\mathsf T_{N,n}}~\le~ \tfrac{q-1}{2\beta}~ \underbrace{\frac1{|E|^2}\sum_{x,y\in E}\mathsf G_{N,n}(x,y)}_{\mathsf U_{N,n}}.\end{align}

We first wish to take the limit of \eqref{eq:argh} as $N$ and $n$ tend to infinity. We start by the terms on the left. Since $E$ is fixed, the terms $\mathsf S_{N,n}$ and $\mathsf T_{N,n}$ can be treated by taking limits  term by term.
It will be easier to use the random-cluster model to understand the different dominations. Set $p$ such that
$$\beta=-\tfrac{q-1}q\log(1-p).$$
Let $\phi_{\bbT_{N,n}}$ be the random-cluster model on $\bbT_{N,n}$  and $\phi^0_{\bbZ^d}$ be the random-cluster measure on $\bbZ^d$ with free boundary conditions (both with parameters $p$ and $q$) \cite[Thm~4.19]{Gri06}. Note that by coupling (see again \cite[Sec~1.4]{Gri06}) we have
$$\langle\sigma_x\cdot\sigma_y\rangle_{\bbT_{N,n}}=\phi_{\bbT_{N,n}}[x\longleftrightarrow y].$$
Any sub-sequential limit (as $n$ and $N$ tend to infinity) of the family $(\phi_{\bbT_{N,n}})$ is a limit-random-cluster measure on $\bbZ^d$ \cite[Def~4.15]{Gri06} and is therefore stochastically dominating $\phi^0_{N,n}$ \cite[Thm~4.19]{Gri06}. As a consequence, we obtain that
\begin{equation*}
 \liminf_{n\rightarrow\infty} \liminf_{N\to\infty}\ \phi_{\bbT_{N,n}}[x\longleftrightarrow y]\ge \phi^0_{\mathbb Z^d}[x\longleftrightarrow y]\ge \phi^0_{\mathbb Z^d}[x,y\longleftrightarrow \infty]\ge \phi^0_{\mathbb Z^d}[0\longleftrightarrow\infty]^2,
 \end{equation*}
 where in the third inequality we used the uniqueness of the infinite-connected component \cite[Theorem 4.33]{Gri06}, and in the last, the FKG inequality \cite[Thm 3.8]{Gri06} and the invariance under translation of $\phi^0_{\bbZ^d}$ \cite[Thm 4.19]{Gri06}.  Since this is true for every $x,y\in E$, we deduce that
 \begin{equation}\label{eq:4}
 \liminf_{n\rightarrow\infty} \liminf_{N\to\infty}\ \mathsf S_{N,n}\ge\phi^0_{\mathbb Z^d}[0\longleftrightarrow\infty]^2,
 \end{equation}
For the limit of $\mathsf T_{N,n}$, we use again the random-cluster model. Fix $n$. For every $y$ in $\mathbb T_{N,n}$ at graph distance larger than $2R\le N$ from $0$, the comparison between boundary conditions \cite[Lem 4.14]{Gri06} gives
\begin{align*}
\phi_{\bbT_{N,n}}[x\longleftrightarrow y]&\le \phi_{B_R}^1[0\longleftrightarrow \partial B_R]\times \phi_{B'_R}^1[x\longleftrightarrow \partial B_R']\le \phi_{B_R}^1[0\longleftrightarrow \partial B_R]^2,
\end{align*}
where we recall that $B_R$ is the ball of radius $R$ around $0$ in $\mathbb T_{\infty,n}$ and $B_R'$ is its translate by $y$. Bounding $\phi_{\bbT_{N,n}}[0\longleftrightarrow y]$ using the inequality above when $0$ and $y$ are at a distance larger than $2R$, and otherwise by 1 for remaining values of $y$, gives 
\begin{align*}
  \frac1{|\mathbb T_{N,n}|} \sum_{y\in \mathbb T_{N,n}} \phi_{\bbT_{N,n}}[0\longleftrightarrow y]\le \frac{|B_R|}{|\mathbb T_{N,n}|}+ \phi^1_{B_R}[0\longleftrightarrow \partial B_R]^2.
\end{align*}
Taking the limit as $N$ tends to infinity (recall that $\phi^1_{B_R}$ tends to $\phi^1_{\bbZ^d}$), and then letting $R$ tend to infinity, gives
\begin{equation*}
  \label{eq:13}
 \limsup_{N\to \infty}  \frac1{|\mathbb T_{N,n}|} \sum_{y\in \mathbb T_{N,n}} \langle \sigma_0\cdot\sigma_y\rangle_{\bbT_{N,n}}\le  \phi^1_{\mathbb T_{\infty,n}}[0\longleftrightarrow \infty]^2.
\end{equation*}
Taking the limit as $n$ tends to infinity gives that 
\begin{equation}
  \label{eq:14}
  \limsup_{n\to\infty}\limsup_{N\to\infty} \mathsf T_{N,n}\le \limsup_{n\to \infty} \phi^1_{\mathbb T_{\infty,n}}[0\longleftrightarrow \infty]^2. 
\end{equation}
Let us now turn to $\mathsf U_{N,n}$ on the right-hand side. As $N$ tends to infinity, the torus Green function $\mathsf G_{N,n}( x, y)$ converges to the ``slab'' Green function $\mathsf G_{\infty,n}(x,y)$ associated with the random walk in $\mathbb T_{\infty,n}$ (as a Riemann sum, the right hand side of \eqref{eq:2} converges to the Fourier representation of $\mathsf G_{\infty,n}(x,y)$, this uses that $r\ge 3$ since one needs the walk in the slab to be transient). Then, the limit of $\mathsf G_{\infty,n}(x,y)$ converges as $n$ tends to infinity to the Green function $\mathsf G$ associated to the simple random walk in $\mathbb Z^d$. Therefore
\begin{equation}
  \label{eq:10}
  \lim_{n\to\infty}\lim_{N\to\infty} \mathsf U_{N,n}= \frac1{|E|^2}\sum_{x,y\in E}\mathsf G(x,y).
\end{equation}

We can now plug \eqref{eq:4}, \eqref{eq:14} and \eqref{eq:10} in \eqref{eq:argh} to get that for every $E$,
\begin{equation}
  \label{eq:3}
\phi^0_{\mathbb Z^d}[0\longleftrightarrow\infty]^2- \limsup_{n\to\infty}\phi^1_{\mathbb T_{\infty,n}}[0\longleftrightarrow \infty]^2\le \frac1{|E|^2}\sum_{x,y\in E}\mathsf G(x,y).
\end{equation}

Now, the random walk in $\mathbb Z^d$ is transient, so that $\mathsf G(x,y)$ tends to 0 as $x$ and $y$ become far apart. We deduce that, as $|E|$ tends to infinity, the right-hand side tends to 0. Overall, we find

%

\begin{equation}
  \label{eq:15}
 \phi^0_{\mathbb Z^d}[0\longleftrightarrow\infty]\le \limsup_{n\to\infty}\phi^1_{\mathbb T_{\infty,n}}[0\longleftrightarrow \infty].
\end{equation}
Since $p_c(\bbZ^d)$ is also defined as the supremum of the values of $p$ for which $\phi^0_{\bbZ^d,p}[0\longleftrightarrow\infty]=0$ \cite[(5.4)]{Gri06}, this immediately implies that
$$p_c(\bbZ^d)\ge \limsup_{n\to\infty}p_c(\bbT_{\infty,n})$$
or equivalently by \eqref{eq:h},
$$\beta_c(\mathbb Z^d)\ge \limsup_{n\to\infty} \beta_c(\bbT_{\infty,n}).$$ Since the inequality 
$\beta_c(\mathbb Z^d)\le \liminf \beta_c(\bbT_{\infty,n})$
is given by  Proposition~\ref{prop:1}, this  concludes the proof.
\end{proof}

Note that the estimate given by the infrared bound is absolutely crucial here and that we do not know how to go around it for non-integer values of $q$. This perfectly illustrates that the random-cluster models associated with the Ising and Potts models may be simpler to handle than Bernoulli percolation in some cases. Other instances of such a phenomenon include proofs of conformal invariance in two dimensions for the random-cluster model with cluster-weight $q=2$ on the square lattice \cite{Smi10} (the corresponding result is still open for Bernoulli percolation) or the proof of continuity of the phase transition for the Ising model in dimension 3 and therefore the random-cluster model with $q=2$ in dimension 3 \cite{AizDumSid15} (the corresponding statement is one of the main conjectures in the theory of Bernoulli percolation).

We wish to advertise the following question, which would require to find an argument not relying on the infrared bound:
\begin{question}
Prove that $p_c(\bbZ^r\times(\bbZ/2n\bbZ)^{d-r},q)$ converges to $p_c(\bbZ^d,q)$ for any (non-necessarily integer valued) $q\ge1$ and any $d>r\ge2$.
\end{question}

\section{Comparison with the slab percolation threshold}

In this section, we link the previous result to the Grimmett-Marstrand result \cite{GriMar90}.  Define the random-cluster model on the infinite graph $\bbZ^r\times\llbracket 0,n\rrbracket^{d-r}$. Even though this graph is not transitive, a non-ambiguous notion of a critical point can be defined as before (asking for the smallest value of $p$ for which $0$ is connected to infinity with positive probability). 

\begin{question}\label{question:1} Show that $p_c(\bbZ^r\times\llbracket 0,n\rrbracket^{d-r},q)$ converges to $p_c(\bbZ^d,q)$ for $q\ge1$ and $d>r\ge2$.\end{question}
Even for integers $q\ge2$ and $r\ge3$, this question, first raised in \cite{Pis96}, is open. Note that combined with the following simple proposition, it would imply that finite connected components have exponential tails for $p>p_c(\bbZ^d)$ (when $d=2$, this result follows from duality and \cite{BefDum12}).
\begin{proposition}[{\cite[Thm.~5.104]{Gri06}}]
Fix $q\ge1$ and $d>r\ge2$. If $p_c(\bbZ^r\times\llbracket 0,n\rrbracket^{d-r},q)$ converges to $p_c(\bbZ^d,q)$, then for any $p>p_c(\bbZ^d,q)$, there exists $c=c(p,q)>0$ such that 
$$\phi_{\bbZ^d,p,q}^1[0\longleftrightarrow x,0\not\longleftrightarrow\infty]\le \exp(-c\|x\|).$$
\end{proposition}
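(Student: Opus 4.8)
The plan is to run the classical supercritical coarse-graining (renormalization) scheme of Grimmett--Marstrand \cite{GriMar90} and Pisztora \cite{Pis96}, adapted to the random-cluster model with $q\ge 1$, using the hypothesis of the proposition only to make the scheme available at \emph{every} $p>p_c(\bbZ^d,q)$ rather than merely above the a priori larger slab-percolation threshold. So fix $p>p_c(\bbZ^d,q)$, and drop $q$. By hypothesis there is an $n$ with $p>p_c(\bbZ^r\times\llbracket 0,n\rrbracket^{d-r})$, hence the random-cluster measure on that slab percolates; by the symmetries of $\bbZ^d$ the same holds after picking any $r$ of the $d$ coordinates as the unbounded ones, and translating in the bounded directions produces a tiling of $\bbZ^d$ by such percolating slabs. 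Recall also that $\phi^1_{\bbZ^d,p}$ almost surely has a unique infinite cluster $C_\infty$ \cite[Thm 4.33]{Gri06}, of positive density.

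Next I would set up the blocks. For $L\gg n$, partition $\bbZ^d$ into cubes $\Lambda_i$ of side $L$ ($i\in\bbZ^d$) and let $\Lambda'_i$ be the concentric cube of side $3L$. Declare $i$ to be \emph{good} if $\Lambda'_i$ contains a cluster that joins, inside $\Lambda_i$, every pair of opposite faces of $\Lambda_i$, that is the unique cluster of diameter at least $L$ inside $\Lambda'_i$, and that is a subset of $C_\infty$. Using the slab percolation of the previous paragraph to create the crossings in each of the $d$ axis directions, the FKG inequality \cite[Thm 3.8]{Gri06} to realise all of them at once, and uniqueness of $C_\infty$ for the last two clauses, one gets $\phi^1_{\bbZ^d,p}[\,i\text{ good}\,]\to 1$ as $L\to\infty$, uniformly in $i$. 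The event $\{i\text{ good}\}$ is increasing and measurable with respect to the edges inside $\Lambda'_i$, so it has bounded range of dependence; since $\phi^1_{\bbZ^d,p}$ is positively associated, the domination-by-product theorem of Liggett, Schonmann and Stacey gives, for $L$ large, that under $\phi^1_{\bbZ^d,p}$ the field $(\mathbbm{1}[\,i\text{ bad}\,])_i$ is stochastically dominated by an i.i.d.\ Bernoulli site percolation of parameter $\varepsilon$, with $\varepsilon$ as small as we please. In particular $\phi^1_{\bbZ^d,p}[\text{every block of }A\text{ is bad}]\le \varepsilon^{|A|}$ for each finite $A$.

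The remaining ingredient is deterministic. On $\{0\longleftrightarrow x\}\cap\{0\not\longleftrightarrow\infty\}$ the cluster $C$ of $0$ is finite and contains $x$. No good block is crossed by $C$: if $i$ were good and $C$ contained an open path joining two opposite faces of $\Lambda_i$ inside $\Lambda_i$, that path would lie in a cluster of diameter $\ge L$ inside $\Lambda'_i$, hence — by the uniqueness clause — in the crossing cluster of the good block, which is contained in $C_\infty$; then $C$ and $C_\infty$ would share a vertex, contradicting $|C|<\infty$. From this, the standard topological argument of \cite{GriMar90,Pis96} yields a $\ast$-connected set $\mathcal S$ of bad blocks that \emph{shields} $C$ from infinity, i.e.\ such that $0$ lies in a bounded component of $\bbZ^d\setminus\bigcup_{i\in\mathcal S}\Lambda_i$. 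Since $\mathcal S$ surrounds $C\ni 0,x$, it has diameter at least $\|x\|$, so $|\mathcal S|\ge c_1\|x\|/L$. As the number of $\ast$-connected sets of $k$ blocks surrounding $0$ is at most $(k+1)C_d^{\,k}$, the domination above gives
$$\phi^1_{\bbZ^d,p}[\,0\longleftrightarrow x,\ 0\not\longleftrightarrow\infty\,]\ \le\ \sum_{k\ge c_1\|x\|/L}(k+1)(C_d\varepsilon)^k\ \le\ e^{-c\|x\|}$$
for some $c=c(p,q)>0$, once $\varepsilon<1/(2C_d)$ (arranged by taking $L$ large); bounded $\|x\|$ is absorbed by decreasing $c$.

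The genuinely hard part is the renormalization itself: choosing the definition of the good blocks so that $\phi^1_{\bbZ^d,p}[\,i\text{ good}\,]\to1$ while good blocks still cannot be crossed by finite clusters, and the accompanying topological lemma producing the shield $\mathcal S$. This is precisely the Grimmett--Marstrand / Pisztora coarse-graining; the uniqueness clause (which is what makes the shielding argument run) is the delicate point, and it is exactly there that the slab hypothesis enters, to keep the construction valid for all $p>p_c(\bbZ^d,q)$. Everything else is soft: the lack of independence of the random-cluster model is absorbed into positive association plus the Liggett--Schonmann--Stacey theorem (which is why $q\ge1$, with no integrality, suffices), and the final bound is a Peierls-type count.
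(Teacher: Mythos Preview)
The paper does not prove this proposition at all; it merely cites it as \cite[Thm.~5.104]{Gri06} and calls it ``simple''. So there is no paper proof to compare against, and your sketch is broadly the right strategy (block renormalization plus a Peierls count), which is indeed what one finds in Grimmett's book.

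That said, your execution contains a genuine gap. You assert that the event $\{i\text{ good}\}$ is increasing and measurable with respect to the edges inside $\Lambda'_i$; neither is true for the event you wrote down. The clause ``is a subset of $C_\infty$'' is manifestly non-local, and the uniqueness clause (``is the unique cluster of diameter at least $L$ inside $\Lambda'_i$'') is not monotone --- opening more edges can create a second large cluster. More importantly, even after repairing the definition, the Liggett--Schonmann--Stacey theorem does \emph{not} apply: that theorem needs $k$-dependence of the field $(\mathbbm 1[i\text{ good}])_i$, and local measurability of the events does not give this under $\phi^1_{\bbZ^d,p}$, whose edge variables are globally correlated. Positive association alone does not rescue LSS. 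The correct substitute, used in \cite{Gri06}, is the domain Markov property together with comparison between boundary conditions: for an \emph{increasing, local} good event one has $\phi^1_{\bbZ^d,p}[\,i\text{ good}\mid \calF_{\text{outside }\Lambda'_i}\,]\ge \phi^0_{\Lambda'_i,p}[\,i\text{ good}\,]$, and iterating this over well-separated boxes yields the product bound $\phi^1_{\bbZ^d,p}[\text{all blocks of }A\text{ bad}]\le \varepsilon^{|A|}$ directly, with no appeal to LSS. This forces you to drop the ``subset of $C_\infty$'' clause and to handle the non-monotone uniqueness clause separately (e.g.\ by bounding its complement under the \emph{wired} measure in $\Lambda'_i$, which is again a local estimate); the linking of neighbouring good blocks then substitutes for your global clause. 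Finally, once you know that $C$ crosses no good block, the Peierls object you want is the $*$-connected set of blocks \emph{intersected} by $C$ (which has cardinality $\ge c\|x\|/L$), not a surrounding shield --- your own paragraph already proves these blocks are all bad, so the shielding detour is unnecessary.
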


Let us conclude that the convergence of $p_c(\bbZ^r\times\llbracket 0,n\rrbracket^{d-r},q)$ to $p_c(\bbZ^d,q)$ is equivalent to the convergence of $\beta_c(\bbZ^r\times\llbracket 0,n\rrbracket^{d-r},q)$ to $\beta_c(\bbZ^d,q)$ as $n$ tends to infinity. This result was proved for $q=2$ (i.e.~for the Ising model) in \cite{Bod05}. In this paper, Bodineau obtained a slightly stronger result related to the {\em slab percolation threshold} $\hat\beta_c(\bbZ^d)$ of the Ising model defined by
$$\hat\beta_c(\bbZ^d):=\inf\big\{\beta\text{ for which }\exists n\ge0\text{ such that } \inf_{N}\inf_{x,y\in\Lambda_{N,n}}\langle\sigma_x\cdot\sigma_y\rangle_{\Lambda(2N,n),\beta,2}>0\big\},$$
where $\Lambda_{N,n}:=\llbracket -N,N\rrbracket^{d-1}\times\llbracket -n,n\rrbracket$ and $\langle\cdot\rangle_{\Lambda(2N,n),\beta,2}$ is the $q=2$ Potts measure on $\Lambda_{N,n}$ with free boundary conditions (we omit the definition here). 

This notion, introduced by Pisztora in \cite{Pis96}, enables one to perform a powerful renormalization scheme to derive a number of properties of the regime $\beta> \hat\beta_c(\bbZ^d)$. Motivated by \cite{Pis96}, we propose to enhance Question~\ref{question:1} into the following one. One can easily extend the notion of the slab percolation threshold to Potts model by replacing $q=2$ with an arbitrary integer $q\ge2$. \medbreak
\begin{question}Show that $\hat\beta_c(\bbZ^d)$ is equal to $\beta_c(\bbZ^d)$ for integer values of $q\ge3$.\end{question}
Let us recall for completeness that despite the fact that the regime $\beta>\hat\beta_c(\bbZ^d)$ is well understood, several fundamental questions remain open even under this additional assumption on $\beta$ (and sometimes even in the case of the Ising model): to mention but a few, the continuity of the magnetization parameter, exponential decay of truncated correlations, description of the translational invariant Gibbs states (see \cite{Bod06} and \cite{Rao17} for the case of the Ising model), etc. Investigating these questions further  is of prime importance.

 \paragraph{Acknowledgments} 
 This research was supported by the IDEX grant from Paris-Saclay and the NCCR SwissMAP. We thank Angelo Ab\"acherli for useful discussions, and Sébastien Martineau and Aran Raoufi for their careful reading of our manuscript and their insightful comments and suggestions.

\bibliographystyle{alpha}

\end{document}